\definecolor{lstbgcolor}{rgb}{0.9,0.9,0.9}
\newtheorem{theorem}{Theorem}[section]
\newtheorem{lemma}[theorem]{Lemma}
\newtheorem{remark}[theorem]{Remark}
\newtheorem{definition}[theorem]{Definition}
\title{The {G}ilmer-{M}asbaum map is not injective on the skein module}
\date{}
\author{EDWIN KITAEFF}
\begin{document}

\begin{abstract}
In \cite{GM}, Gilmer and Masbaum use Witten-Reshetikhin-Turaev (WRT) invariants to define a map from the Kauffman bracket skein module to a set of complex-valued functions defined on roots of unity in order to provide a lower bound for its dimension. We show that the restriction of the map to a certain homology class is not injective. We also provide a basis for the KBSM of mapping tori associated to a power of a Dehn twist on the $2$-torus.
\end{abstract}
\maketitle
\section{Introduction}\label{sec:Intro}
\subsection{The Kauffman bracket skein module}

Let $M$ be a closed oriented 3-manifold.\ The Kauffman bracket skein module $K(M,\mathbb{Q} (A))$ (or $K(M)$ for short) over $\mathbb{Q} (A)$, or just skein module here, was introduced independently by Przytycki (\cite{Pr}) and Turaev (\cite{Turaev}).\\
The skein module $K(M)$ is defined as the $\mathbb{Q} (A)$-vector space spanned by the framed links in $M$ over the ground field $\mathbb{Q} (A)$ modulo isotopies and the Kauffman skein relations :
\begin{center}
\includegraphics[scale=0.3]{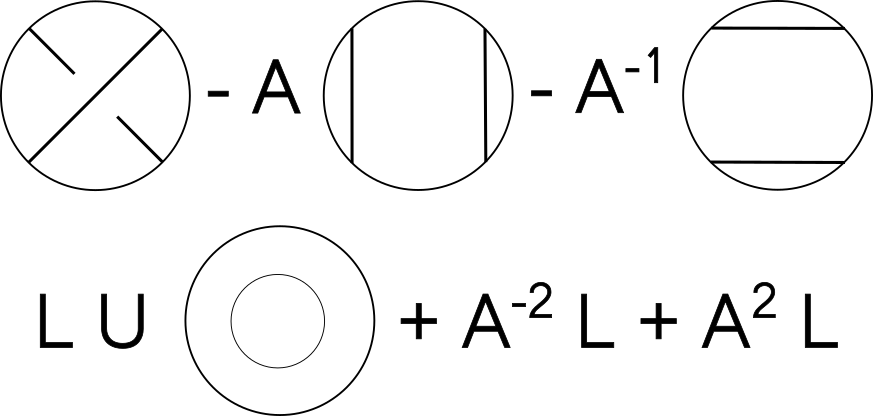} 
\end{center}

For $\xi\in\mathbb{C}^*$, we use the notation $K_\xi (M) = K(M,\mathbb{Q} [A^{\pm 1}]) \underset{A=\xi}{\bigotimes} \mathbb{C}$.

It was shown in \cite{GJS} that the bracket skein module over $\mathbb{Q} (A)$ is finite dimensional for every closed 3-manifold. Unfortunately, this proof is not constructive and cannot be used to compute the dimension of $K(M)$. Recently, another proof of this fact was provided in \cite{BelDet}, in a more constructive and elementary way. However, even if \cite{BelDet} provide a set of generators, it does not give a basis for $K(M)$, nor its dimension.

The computation of $K(M)$ is therefore still a very active and open problem. So far, three main ideas stands out to compute the skein module. The first one, used for instance in \cite{HosPrz93} and in \cite{HosPrz95} for lens spaces, is to view $M$ as a Heegaard splitting. Then, $K(M)$ is the skein module of a handlebody (for which we know a basis) quotiented by some relations called "slide relations". Unfortunately, the computations obtained in this way become very complicated as soon as the genus of the Heegaard splitting is more than 1.\\
The second method is to compare $K(M)$ to the better known $K_\zeta (M)$. This method was developed in \cite{DetKalSik} and can be applied on 3-manifolds with some "tameness" property. However, the tameness property is not easy to check and does not always hold (an example where it does not hold would be when the character variety of $M$ is infinite).\\
The third idea is to combinatorially compute a set of generator for $K(M)$ and prove that this set is free through representations of $K(M)$. In this vein, Gilmer and Masbaum introduced the evaluation map in \cite{GM}, which they applied to exhibit a free family of $K( \Sigma_g \times \mathbb{S}^1 )$. Later, in \cite{Detcherry_2021}, it was found that this family was also a set of generators, providing a basis for $K(\Sigma_g \times \mathbb{S}^1 )$.

In this paper, we prove that the (third) method of \cite{GM} cannot always detect that a given family of skein elements is free. It then appears that stronger methods must be developed to deal with the problem of the dimension of skein modules. 

\subsection{The Gilmer-Masbaum evaluation map}

The technique of \cite{GM} is relying on the Witten-Reshetikhin-Turaev invariants $RT_{\xi} (M ,L)$  of a framed link $L$ in $M$ associated to $\xi$ a primitive root of even degree. The invariant $RT_\xi (M,L)$ only depends on the skein class of $L$. \\
Below is a more detailed description of this method.

Denote by $\mathbb{U}_0 := \{ e^{\frac{is\pi }{r}} \ \vert\ r > 1,\ gcd(s,2r) = 1 \}$ the set of primitive roots of unity of even order, and by $\mathbb{C}^{\mathbb{U}_0 }_{a.e.}$ the set of complex-valued functions that are defined almost everywhere on $\mathbb{U}_0$.\\
For given $\lambda_1 , \ldots , \lambda_n \in \mathbb{Q} (A)$ and $f_1 , \ldots , f_n \in \mathbb{C}^{\mathbb{U}_0 }_{a.e.}$ one can define the element $$ \sum\limits_{j=0}^{n} \lambda_j f_j  = ( \xi \rightarrow \sum\limits_{j=0}^{n} \lambda_j ( \xi ) f_j (\xi ) ) \in \mathbb{C}^{\mathbb{U}_0 }_{a.e.}$$
This is well-defined since $\lambda_j$ is well-defined everywhere except at its poles. This gives $\mathbb{C}^{\mathbb{U}_0 }_{a.e.}$ a structure of $\mathbb{Q} (A)$-vector space.

Here is the evalutation map : 

\begin{definition}[\cite{GM}]\label{Def:ev}
$$
\begin{array}[t]{lrcl}
ev_M : & K(M) & \longrightarrow & \mathbb{C}^{\mathbb{U}_0 }_{a.e.} \\
    & \sum\limits_{j=1}^{n} \lambda_j L_j & \longmapsto & \left( \begin{array}{lrcl}
 \mathbb{U}_0 & \longrightarrow & \mathbb{C} \\
     \xi & \longmapsto & \sum\limits_{j=1}^{n} \lambda_j (\xi ) RT_{\xi} (M,L_j ) \end{array} \right) \end{array} $$
\end{definition}

This application is slightly different from the original one since we are also looking at primitive roots of unity of order divisible by $4$.

Since the skein relations are homogeneous, the skein module admits a decomposition into graded subspaces :
$$ K(M) = \underset{\alpha \in H_1 (M,\faktor{\mathbb{Z}}{2\mathbb{Z}})}{\bigoplus} K_\alpha (M)$$
in which $K_\alpha (M)$ is the skein module of homology $\alpha$.

One can use the evaluation map to check linear independence in $K(M)$. Indeed, if a family of skein elements have a free image in $\mathbb{C}^{\mathbb{U}_0 }_{a.e.}$, this family is itself free. Although difficult, we can then show that a family is free in $\mathbb{C}^{\mathbb{U}_0 }_{a.e.}$ using only arithmetic, analysis and linear algebra.\\
In \cite{GM}, this technique yielded linearly independent families for $K(\Sigma\times I)$. 
Historically, this map has been used previously in a similar fashion in \cite{GilHar} for quaternionic manifolds and in \cite{Gil18} for $\mathbb{T}^3$. It has also been used in \cite{DetKalSik2} to show the non-triviality of the empty link in rational homology spheres.

However, a key ingredient for this method to be optimal is that $ev_M$ has to be injective on the graded subspace $K_\alpha (M)$ for each homological class $\alpha$. The question of the injectivity of the evaluation map is then a very natural question, which was asked by Gilmer and Masbaum in \cite{GM}.

In fact, it has already been answered in \cite{GilLens} for $SU(2)$-invariants (that is with $\mathbb{U}_0$ restricted to primitive roots of order $4r$) as Gilmer exhibited elements of the basis of the skein module of certain lens spaces for which all their $SU(2)$-invariants are zero. However, since this work was done for only $SU(2)$-invariants, one could hope the map introduced above to be stronger. In particular, we know some examples in the set given in \cite{GilLens} that are not detected by the $SU(2)$-invariants but are detected by the $SO(3)$-invariants.\\
Yet, we will show that this map is also not always injective.

\subsection{The mapping tori}\label{sec:MappingTori}

The manifolds that will interest us are from the following family :

\begin{definition}\label{def:MappingTori}
For $B\in Mod(\mathbb{T}^2) \cong SL_2 (\mathbb{Z} )$, we define the mapping torus of the 2-torus $\mathbb{T}^2$ of monodromy $B$ by :
$$
M_B = \faktor{\mathbb{T}^2 \times [0,1]}{(x,0)\sim (B(x),1)}
$$
\end{definition}

Let $B_k\in Mod(\mathbb{T}^2 )$ be the application of monodromy $\begin{pmatrix}
1 & k \\
0 & 1 \\
\end{pmatrix} $, which represent a power of a Dehn twist on $\mathbb{T}^2$, and let $M_k := M_{B_k}$.

Let $(\alpha , \beta )$ be a basis of $H_1 (\mathbb{T}^2 ,\faktor{\mathbb{Z}}{2\mathbb{Z}})$ such that $\alpha $ is the class of the curve along which the Dehn twist is done. Denote by $(x,y)$ the image of $(\alpha , \beta )$ by the map induced by the inclusion $\mathbb{T}^2  \rightarrow M_k$. Let $z\in H_1 (M_k,\faktor{\mathbb{Z}}{2\mathbb{Z}})$ be the class of the curve $\{ pt \} \times \mathbb{S}^1$.\\ 

The easiest example of thee non-injectivity of the evaluation map will be found for $k=3$. 
\begin{lemma}\label{lemma:Homology}
The first homology group of $M_3$ is given by : $$H_1 (M_3 ,\faktor{\mathbb{Z}}{2\mathbb{Z}}) = \{ \emptyset , y , z ,yz \}$$
\end{lemma}

\begin{proof}
It is not hard to check (see \cite[Lemma 2.3]{Kin} for instance) that $H_1 (M_3 ,\faktor{\mathbb{Z}}{2\mathbb{Z}}) \sim \left( \faktor{\mathbb{Z}}{2\mathbb{Z}} \right)^2$. On the other hand, $H_1 (M_3 ,\faktor{\mathbb{Z}}{2\mathbb{Z}})$ is obviously generated by $\{ x,y,z \}$, but by isotoping $y$ all along the $\mathbb{S}^1$ factor, one get that $y=3x+y = x+y$. Hence $H_1 (M_3 ,\faktor{\mathbb{Z}}{2\mathbb{Z}})$ is generated by $\{ y,z \}$ .
\end{proof}

\subsection{The horizontal part}\label{sec:HorizontalPart}

Since the fibration over $\mathbb{S}^1$ induces a map $M_k \rightarrow \mathbb{S}^1$, we can define the \textit{horizontal part} of $M_k$ to be :
$$ h_k := Ker ( H_1 (M_k ,\faktor{\mathbb{Z}}{2\mathbb{Z}} ) \rightarrow H_1 (\mathbb{S}^1 ,\faktor{\mathbb{Z}}{2\mathbb{Z}}) )$$
Similarly, the \textit{horizontal part} of $K(M_k )$ will be :
$$
\mathcal{H}_k := \underset{\alpha\in h_k }{\bigoplus} K_\alpha (M)
$$

Kinnear computed in \cite{Kin} the dimensions of the skein modules of the mapping tori of the 2-torus, including the dimension of $\mathcal{H}_k$ :

\begin{theorem}[\cite{Kin}]\label{Thm:Kinnear}
The dimension of $\mathcal{H}_k$ is $\dfrac{k-1}{2} + 4$ if $k$ is odd and $\dfrac{k}{2} +5$ if $k$ is even.
\end{theorem}

According to \cite[Corollary 1.7.]{GJV}, the skein module of a 3-manifold containing an embedded 2-torus is spanned by skein elements that can be represented by links that intersecting the torus at most once. In our case this implies that $\mathcal{H}_k$ is spanned by skeins that can be represented by links in $\mathbb{T}^2 \times [\frac{1}{4} ,\frac{3}{4}] \subset M_k$.

To facilitate the computations, we will use the Frohman-Gelca basis for $K(\mathbb{T}^2 ):= K(\mathbb{T}^2 \times I)$. We recall its description here :\\
As the skein module of a thickened surface, $K(\mathbb{T}^2)$ has an algebra structure induced by the operation $\alpha \star \beta$ of stacking $\alpha$ over $\beta$.\\
For coprime integers $p,q$ and $x,y$ as in Lemma \ref{lemma:Homology}, we define $\gamma_{(p,q)}$ to be the skein element represented by an oriented curve of homology class $p x  + qy $ on $\mathbb{T}^2 \times \{ \frac{1}{2} \} \subset M_k$. The multicurves $\gamma_{(p,q)}^n$, composed by $n$ parallel copies of $\gamma_{(p,q)}$, together with the empty curve form a basis of $K(\mathbb{T}^2 ) \subset K(M_k)$.

We recall the definition of the Chebychev polynomials of the first kind $(T_n)_{n\geq 0}$ :

\begin{equation}\label{Chebychev1}
\left\lbrace \begin{array}{lrcl}
T_0 = 2 , \ T_1 = X  \\ 
\forall n\geq 3 , \ T_n = XT_{n-1} - T_{n-2}\end{array}\right.
\end{equation}

Frohman and Gelca introduced the following basis of $K(\mathbb{T}^2)$, for which the product (stacking operation) satisfies the so-called product-to-sum formula :

\begin{theorem}\label{Thm:FormulaT}\cite{FroGel}
The family $\{ (p,q)_T := T_{d} (\gamma_{(\frac{p}{d} , \frac{q}{d})} ) ,\ d=gcd(p,q) \}$ is a basis for $K(\mathbb{T}^2 )$ for which we have the following :
$$
(p,q)_T \star (r,s)_T = A^{ps-qr} (p+r,q+s)_T + A^{qr-ps} (p-r,q-s)_T
$$
\end{theorem}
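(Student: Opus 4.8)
The plan is to prove the two assertions separately: that $\{(p,q)_T\}$ is a $\mathbb{Q}(A)$-basis of $K(\mathbb{T}^2)$, and the product-to-sum formula. For the basis, I would start from the classical fact (Przytycki) that the multicurves $\{(a,b)\}$ form a basis: any framed link in the thickened torus can be isotoped into a standard annular position and reduced by the Kauffman skein relation to a $\mathbb{Q}(A)$-combination of multicurves, and these are linearly independent. It then suffices to check that the transition to $\{(p,q)_T\}$ is invertible. Writing $d=\gcd(p,q)$ and letting $u=(p/d,q/d)$ be the underlying primitive curve, the definition $(p,q)_T=T_d(u)$ together with $T_d(X)=X^d+(\text{lower order})$ from \eqref{Chebychev1} expresses $(p,q)_T$ through the $\star$-powers $u^{\star j}$; each such power reduces, by resolving its crossings, to the full multicurve $(jp/d,jq/d)$ plus a combination of multicurves with strictly fewer components. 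Grading by the number of components, the change of basis is therefore unitriangular with $1$'s on the diagonal, and the basis claim follows.

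For the product formula I would argue by induction on the geometric intersection number $n=|ps-qr|$. The base case $n=1$ is a single transverse crossing in the thickened torus: resolving it with the Kauffman skein relation yields the two multicurves $(p+r,q+s)$ and $(p-r,q-s)$ (both primitive, hence equal to their $T$-versions), with coefficients $A^{\pm1}=A^{\pm(ps-qr)}$ recording the sign of the crossing, which is exactly the asserted identity. To organise the inductive step I would exploit that the mapping class group $SL_2(\mathbb{Z})$ acts on $K(\mathbb{T}^2)$ by $\star$-algebra automorphisms sending $(a,b)_T\mapsto (M(a,b))_T$ and preserving $ps-qr$; applying a matrix carrying the primitive part of $(p,q)$ to $(1,0)$ normalises the problem, and the Chebyshev recursion $T_n=XT_{n-1}-T_{n-2}$ of \eqref{Chebychev1} lets me express a product of curves with intersection number $n$ in terms of products with intersection number $<n$, to which the inductive hypothesis applies.

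The step I expect to be the real obstacle is the cancellation in this induction. Expanding a product of two Chebyshev polynomials, or equivalently resolving all $2^n$ states of the $n$ crossings, produces many intermediate multicurves, and it is precisely the three-term recursion \eqref{Chebychev1} that forces all but two of them to telescope away, leaving $A^{ps-qr}(p+r,q+s)_T+A^{qr-ps}(p-r,q-s)_T$. Checking this cancellation, and verifying that the $A$-exponents accumulated from the framing and writhe at each resolved crossing assemble into the clean $\pm(ps-qr)$, is where the bookkeeping concentrates.

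A cleaner but less elementary alternative, which I would mention, is to construct the embedding of $K(\mathbb{T}^2)$ into the invariant part of the quantum torus $\mathbb{C}_A[\ell^{\pm1},m^{\pm1}]$ under $\ell\mapsto\ell^{-1},\,m\mapsto m^{-1}$, sending $(p,q)_T\mapsto \ell^pm^q+\ell^{-p}m^{-q}$. Under this map the formula becomes the one-line expansion of $(\ell^pm^q+\ell^{-p}m^{-q})(\ell^rm^s+\ell^{-r}m^{-s})$ via the quantum-torus commutation relation, the two surviving terms pairing up automatically; the cost is then shifted entirely to proving that this assignment is a well-defined algebra homomorphism, which is itself essentially equivalent to the product-to-sum identity and so must still be grounded in the single-crossing computation above.
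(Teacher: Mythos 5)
First, a point of comparison: the paper does not prove this statement at all — it is imported verbatim from Frohman--Gelca \cite{FroGel} — so your proposal can only be judged against the original argument, not against anything internal to the paper. Your treatment of the basis claim is correct, and in fact simpler than you make it sound: for a primitive curve $u$, the stacked power $u^{\star j}$ is isotopic to $j$ disjoint parallel copies of $u$, i.e.\ it already \emph{is} the multicurve $(ja,jb)$, with no crossings to resolve; so $(p,q)_T$ equals the multicurve $(p,q)$ plus a combination of multicurves with strictly fewer components, and the change of basis from Przytycki's multicurve basis is unitriangular exactly as you say.

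The genuine gap is in the product-to-sum formula. Your induction on $n=|ps-qr|$ reduces $n$ through the Chebyshev recursion $T_d=XT_{d-1}-T_{d-2}$, but this recursion only produces a reduction when at least one factor is non-primitive ($\gcd\geq 2$), since $T_1=X$ gives nothing. When both $(p,q)$ and $(r,s)$ are primitive and $n>1$ — for instance $(0,1)\star(2,1)$ — there is no Chebyshev step available, and the $SL_2(\mathbb{Z})$ normalisation cannot lower $n$ either, because $|ps-qr|$ is an invariant of the pair: the best the action does is put the pair in the form $\bigl((0,1),(n,b)\bigr)$ with $\gcd(n,b)=1$, which is the same problem. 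Nor can you route around it by a Farey-type splitting $(n,b)=(n_1,b_1)+(n_2,b_2)$ with determinant $\pm 1$: carrying this out for $(0,1)\star(2,1)$, using the intersection-number-one case, one finds $(0,1)\star(2,1)=A^{-2}(2,2)_T-A^{-2}(0,2)_T+(1,-1)\star(1,1)$, and the leftover product $(1,-1)\star(1,1)$ is carried by an element of $SL_2(\mathbb{Z})$ back to $(0,1)\star(2,1)$ itself; the resulting identity is consistent but circular, and iterating it yields no new information. This primitive-primitive case is precisely where the content of the theorem sits (note that its right-hand side involves genuinely non-primitive classes, e.g.\ $(0,1)\star(2,1)=A^{-2}(2,2)_T+A^{2}(2,0)_T$), and it is the case that must be settled by the direct resolution-and-cancellation computation that you explicitly defer as ``bookkeeping''. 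As structured, your induction never reduces this case to anything smaller, so the plan does not close; the same objection is the reason your quantum-torus alternative is circular, as you yourself note.
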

\begin{remark}
Here we choose the convention $(0,0)_T  = 2. \emptyset$
\end{remark}
\subsection{The results}
\ \smallbreak
In section \ref{Sec:Basis}, we compute a basis for $\mathcal{H}_k$ :
\begin{theorem}\label{Thm:basis}
\ 
	\begin{itemize}
	\item
If $k$ is odd, $\{ (p,0)_T \ \vert \ 0\leq p \leq \left\lfloor \dfrac{k}{2} \right\rfloor \} \cup \{ (0,1)_T ,(0,2)_T ,(1,2)_T \}$ is a basis of $\mathcal{H}_k$
	\item
If $k$ is even, $\{ (p,0)_T \ \vert \ 0\leq p \leq \left\lfloor \dfrac{k}{2} \right\rfloor \} \cup \{ (0,1)_T ,(0,2)_T ,(1,2)_T , (1,1)_T \} $ is a basis of $\mathcal{H}_k$
	\end{itemize}
\end{theorem}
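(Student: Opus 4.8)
The plan is to realize $\mathcal{H}_k$ as a quotient of the Frohman--Gelca algebra $K(\mathbb{T}^2)$ and then reduce an arbitrary fiber multicurve to the listed elements. First I would argue (this is where one must be a little careful about components that wind around the base circle) that $\mathcal{H}_k$ is spanned by the fiber multicurves $(p,q)_T$: every class in $h_k$ is carried by a fiber multicurve, and a representative of a horizontal skein can be pushed into a collar $\mathbb{T}^2\times[\tfrac{1}{4},\tfrac{3}{4}]$ of a single fiber, so that the image of the natural map $K(\mathbb{T}^2)\to K(M_k)$ exhausts $\mathcal{H}_k$. It then suffices to work inside the image of the Frohman--Gelca basis and to identify the relations that $K(M_k)$ imposes on it.

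Next I would write down those relations geometrically. Sliding an entire fiber multicurve once around the base circle applies the monodromy $B_k$, producing a relation of the form $(p,q)_T=(p+kq,q)_T$ in $K(M_k)$ (one checks the framing contributes no extra factor); together with the unoriented symmetry $(p,q)_T=(-p,-q)_T$ this is the first family of relations. A finer family comes from sliding only the top curve of a stacked product around the circle, which yields the trace relation $\alpha\star\beta=\beta\star B_k(\alpha)$ for all $\alpha,\beta\in K(\mathbb{T}^2)$. Expanding both sides with the product-to-sum formula of Theorem \ref{Thm:FormulaT} turns each such relation into an explicit identity among the $(p,q)_T$, with coefficients that are monomials in $A$.

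The core of the argument is a reduction organized by the \emph{level} $q\ge 0$. Using the trace relations with $\alpha,\beta$ whose levels sum to $q$, I would express any $(p,q)_T$ with $q\ge 3$ in terms of strictly smaller levels, so that only $q\in\{0,1,2\}$ survive. At level $0$ the monodromy acts trivially, but combining the level-$2$ relations with the unoriented symmetry produces the translation $(p,0)_T=(p+k,0)_T$; together with $(p,0)_T=(-p,0)_T$ this confines the first coordinate to the dihedral fundamental domain $0\le p\le\lfloor k/2\rfloor$. At levels $1$ and $2$ the monodromy reduces the first coordinate modulo $k$, respectively $2k$, and the remaining trace relations cut the survivors down to $(0,1)_T$ (together with $(1,1)_T$ when $k$ is even) and to $(0,2)_T,(1,2)_T$. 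The parity of $k$ enters precisely here, through which residues remain. I expect this level-by-level reduction to be the main obstacle: the relations at levels $0$ and $2$ are coupled, higher levels must be shown to collapse entirely, and one must track the $A$-monomial coefficients carefully to verify that exactly the listed elements remain and that the parity dichotomy comes out correctly.

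Finally, I would not establish linear independence by hand but would read it off from Theorem \ref{Thm:Kinnear}: the proposed set has cardinality $\tfrac{k-1}{2}+4$ when $k$ is odd and $\tfrac{k}{2}+5$ when $k$ is even, which is exactly $\dim\mathcal{H}_k$. Since a spanning set whose cardinality equals the dimension is automatically a basis, once the reduction shows that the listed elements span $\mathcal{H}_k$, the proof is complete; as a byproduct this also forces the fiber multicurves to exhaust $\mathcal{H}_k$, tightening the generation step above.
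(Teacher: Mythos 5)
Your overall strategy coincides with the paper's: your trace relation $\alpha\star\beta=\beta\star B_k(\alpha)$, specialized to $\alpha=(1,0)_T$ and $\alpha=(0,1)_T$, is exactly how the paper derives its two key identities (Lemma \ref{Lemma:Formulas}), the level-by-level reduction to $q\in\{0,1,2\}$ is the paper's proof scheme, and reading off linear independence from Theorem \ref{Thm:Kinnear} via the cardinality count is also the paper's route. However, there are two concrete gaps in your write-up.

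First, your justification of the generation step would fail. Lying in $h_k$ is only a mod-$2$ condition: a knot that winds twice around the base circle has class in $h_k$, yet its integral winding number $2\in H_1(\mathbb{S}^1,\mathbb{Z})$ is an isotopy invariant, so it can never be pushed into the collar $\mathbb{T}^2\times[\tfrac14,\tfrac34]$. Nor can the dimension count repair this as a ``byproduct'': knowing that your listed set spans $\mathrm{Span}_{\mathbb{Q}(A)}\{(p,q)_T\}\subseteq\mathcal{H}_k$ and has cardinality $\dim\mathcal{H}_k$ yields a basis of $\mathcal{H}_k$ only if one already knows either that the $(p,q)_T$ span $\mathcal{H}_k$ or that the listed set is linearly independent; neither follows, so the reasoning is circular. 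The statement that fiber multicurves exhaust $\mathcal{H}_k$ has to be imported from Kinnear's structural computation (which identifies the horizontal part with a quotient of $K(\mathbb{T}^2)$) rather than from an isotopy argument --- the paper leans on the same fact implicitly. Second, the reduction you defer as ``the main obstacle'' is where the actual content lies, and it contains a subtlety your sketch does not anticipate: when $k$ is odd, substituting $(p+1,q)_T=(p-1,q)_T$ into the level-coupling relation leaves a \emph{same-level} term $(p+\epsilon,q+1)_T$ whose first coordinate has the opposite parity, so a single application does not close up; the paper must apply the relation a second time and then use $(p+2\epsilon,q)_T=(p,q)_T$ to eliminate it, checking that the resulting coefficient $A^p-A^{2(kq-p)-\epsilon}$ is nonzero. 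Relatedly, there is no clean translation $(p,0)_T=(p+k,0)_T$: sliding acts trivially at level $0$, and the confinement of $(p,0)_T$ to $0\le p\le\lfloor k/2\rfloor$ holds only modulo level-$2$ elements, as you partly note. So your plan is viable --- the paper completes exactly this outline --- but as written the generation step is wrong and the core computation is asserted rather than proved.
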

\begin{remark}
With the same notations for the homology classes of $M_k$ as in \ref{lemma:Homology}, when $k$ is even (resp. $k$ is odd) the graded subspaces that are not in $\mathcal{H}_k$ are $K_{z} (M) ,\ K_{x+z} (M),\ K_{y+z} (M),\ K_{x+y+z} (M)$ (resp. $K_{z} (M) ,\ K_{y+z} (M)$).\\
One can deduce from \cite[Prop. 4.2.]{Kin} that the dimension of each of them is $1$ where the generators are the natural ones. Therefore, Theorem \ref{Thm:basis} provides a full basis of $K(M_k )$.
\end{remark}

In section \ref{Sec:ev} we prove the main theorem :
\begin{theorem}\label{Thm:Main}
$(0,2)_T + (1,2)_T$ is a non-zero vector in $Ker (ev_{M_3}\vert_{K_\emptyset (M_3)})$
\end{theorem}
The fact that $(0,2)_T + (1,2)_T$ is non-zero in $K(M_3)$ is a direct consequence of Theorem \ref{Thm:basis}.
\subsection*{Acknowledgement}\label{sec:acknowledgement}

I want to thank my PhD supervisor, Renaud Detcherry, for his support and his substantial help during the elaboration of this paper.\\
This work was partially supported by the ANER "CLICQ" of the Région Bourgogne Franche-Comté. The IMB receives support from the EIPHI Graduate School (contract ANR-17-EURE-0002).

\section{A basis for the horizontal part \texorpdfstring{$\mathcal{H}_k$}{}}\label{Sec:Basis}

According to Theorem \ref{Thm:Kinnear}, the set described in Theorem \ref{Thm:basis} has $\dim_{\mathbb{Q} (A)} \mathcal{H}_k$ elements. This is why it is sufficient to show that this set spans $\mathcal{H}_K$ to prove that it is a basis.\\
To do so, we first notice the following relations :

\begin{lemma}\label{Lemma:Formulas}
Let $p,q\in\mathbb{Z}$.
\begin{equation}
	\tag{a}\textrm{If }q\neq 0,\ (p+1,q)_T = (p-1,q)_T 
\label{Dependance1}
\end{equation}
\begin{equation}
	\tag{b} A^{p} (p,q+1)_T + A^{-p} (p,q-1)_T = A^{kq-p} (p+k,q+1)_T + A^{p-kq} (p-k,q-1)_T
\label{Dependance2}
\end{equation}
\end{lemma}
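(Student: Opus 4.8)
The plan is to derive both identities from a single geometric principle special to the mapping torus: a curve lying on a fiber $\mathbb{T}^2\times\{t\}$ can be slid once around the $\mathbb{S}^1$-direction of $M_k$, and when it returns to the fiber it has been acted on by the monodromy $B_k$, its fiber (blackboard) framing being carried along. Concretely, if a curve sits at the top of a stacked $\star$-product and we push it up through the gluing region $(x,0)\sim(B_k(x),1)$, it reappears at the bottom as the image curve; thus for any fiber element $(r,s)_T$ one obtains, in $K(M_k)$, an equality that exchanges the top and bottom factors of a $\star$-product at the cost of replacing $(r,s)_T$ by $B_k(r,s)_T=(r+ks,s)_T$. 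I would first record this sliding statement carefully, together with the elementary identity $(a,b)_T=(-a,-b)_T$, which holds because the underlying multicurve is unoriented.

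For \eqref{Dependance1}, the point is that $B_k\begin{pmatrix}1\\0\end{pmatrix}=\begin{pmatrix}1\\0\end{pmatrix}$, so the meridian $(1,0)_T$ is fixed by the monodromy. Sliding it from the bottom of $(1,0)_T\star(p,q)_T$ around to the top therefore yields $(1,0)_T\star(p,q)_T=(p,q)_T\star(1,0)_T$ in $K(M_k)$. Expanding both sides with the product-to-sum formula of Theorem \ref{Thm:FormulaT} (and using $(1-p,-q)_T=(p-1,q)_T$) gives
$$A^q(p+1,q)_T+A^{-q}(p-1,q)_T=A^{-q}(p+1,q)_T+A^q(p-1,q)_T,$$
hence $(A^q-A^{-q})\bigl((p+1,q)_T-(p-1,q)_T\bigr)=0$. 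Since $A^q-A^{-q}\neq0$ in $\mathbb{Q}(A)$ exactly when $q\neq0$, this proves \eqref{Dependance1} and explains the hypothesis $q\neq0$.

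For \eqref{Dependance2}, I would apply the same sliding to the longitude $(0,1)_T$, which is \emph{not} monodromy-invariant: $B_k\begin{pmatrix}0\\1\end{pmatrix}=\begin{pmatrix}k\\1\end{pmatrix}$. Sliding $(0,1)_T$ from the top of $(p,q)_T\star(0,1)_T$ once around the $\mathbb{S}^1$-direction to the bottom converts it into $(k,1)_T$, giving $(p,q)_T\star(0,1)_T=(k,1)_T\star(p,q)_T$ in $K(M_k)$. Expanding the left-hand side by Theorem \ref{Thm:FormulaT} produces $A^p(p,q+1)_T+A^{-p}(p,q-1)_T$, the left side of \eqref{Dependance2}; expanding the right-hand side and rewriting $(k-p,1-q)_T=(p-k,q-1)_T$ produces $A^{kq-p}(p+k,q+1)_T+A^{p-kq}(p-k,q-1)_T$, the right side. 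This is exactly the claimed relation.

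The main obstacle is the framing bookkeeping in the sliding principle: I must verify that pushing a fiber curve through the shear gluing $B_k$ carries the blackboard framing without introducing any spurious power of $A$, so that all the $A$-exponents in \eqref{Dependance1} and \eqref{Dependance2} arise solely from the product-to-sum formula. In fact the exact matching of exponents in \eqref{Dependance2} serves as the consistency check that this framing factor is trivial. A related point to fix at the outset is the orientation convention for the fibration, which determines whether the slide applies $B_k$ or $B_k^{-1}$ (and hence whether $(0,1)_T$ becomes $(k,1)_T$ or $(-k,1)_T$); one selects the convention that reproduces the stated exponent $kq-p$.
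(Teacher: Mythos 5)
Your proposal is correct and follows essentially the same route as the paper's own proof: slide a fiber curve around the $\mathbb{S}^1$-direction so that exchanging the factors of a $\star$-product applies the monodromy $B_k$, then use the invariance of $(1,0)_T$ (for (a)) and $B_k(0,1)_T=(k,1)_T$ (for (b)) together with the Frohman--Gelca product-to-sum formula and the symmetry $(a,b)_T=(-a,-b)_T$. The framing and orientation caveats you raise are handled implicitly in the paper by the same consistency of exponents you invoke.
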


\begin{proof}
Isotoping any skein element of the form $(p,q)_T \in \mathcal{H}_k$ along the $\mathbb{S}^1$ factor gives the relation 
$$(p,q)_T = B_k ((p,q)_T) = (p+qk, q)_T $$
Applying this to $(1,0)_T$, which is invariant by $B_k$, we get :
$$
(p,q)_T \star (1,0)_T = (1,0)_T \star (p,q)_T
$$
Using the product-to-sum formula of Theorem \ref{Thm:FormulaT} it becomes :
$$
A^{-q} (p+1,q)_T +A^q (p-1,q)_T  = A^q (p+1,q)_T +A^{-q} (p-1,q)_T $$
Thus
$$(A^{-q} - A^q ) (p+1,q)_T = (A^{-q} - A^q ) (p-1 ,q)_T
$$
If $q\neq 0$, we finally have that $(p+1,q)_T = (p-1 ,q)_T$.
\item
One the other hand, since $B_k ((0,1)_T ) = (k,1)_T$ :
$$
(p,q)_T \star (0,1)_T = (k,1)_T \star (p,q)_T
$$
Applying the product-to-sum formula again, and because $(p,q)_T = (-p,-q)_T$ :
$$
A^{p} (p,q+1)_T +A^{-p} (p,q-1)_T = A^{kq-p} (p+k,q+1)_T +A^{p-kq} (p-k,q-1)_T
$$
\end{proof}
From these relations, we can prove Theorem \ref{Thm:basis} :
\begin{proof}[Proof of theorem \ref{Thm:basis}]
Suppose that $k$ is odd.\\
Because $(p,q)_T=(-p,-q)_T$, the set $\{ (p,q)_T , \ q\geq 0 \}$ spans $\mathcal{H}_k$.\\
During this proof, we will use several times the formula $\ref{Lemma:Formulas}$.(\ref{Dependance1}), to say that 
\begin{equation}\label{ligne}\tag{e1}
\forall q>0,\ \forall p\in\mathbb{Z},\ (p,q)_T \in Span_{\mathbb{Q} (A)} \{ (0,q)_T , (1,q)_T \}
\end{equation}
Thus, $$\mathcal{H}_k \in Span_{\mathbb{Q} (A)} \{ (p,0)_T \ \vert\ p \in \mathbb{Z} \} \cup  \{ (p,q)_T \ \vert\ q> 0,\ p\in \{ 0,1\}\}$$

Consider $p\in\{ 0,1\}$ and $q>1$. Since $k$ is odd, injecting (\ref{ligne}) into (\ref{Dependance2}) leads to the following equation :
\begin{equation}\label{kodd}\tag{e2}
    A^{p} (p,q+1)_T  = A^{kq-p} (1-p ,q+1)_T + A^{p-kq} (1-p ,q-1)_T - A^{-p} (p,q-1)_T
\end{equation}
Re-applying this equality for $(1-p ,q+1)_T$, we get :
\begin{align*}
A^{p} (p,q+1)_T  
&=  A^{2 kq+p-2} (p ,q+1)_T + A^{-p} (p ,q-1)_T \\ 
&-A^{kq+p-2} (1-p,q-1)_T + A^{p-kq} (1-p ,q-1)_T - A^{-p} (p,q-1)_T 
\end{align*}
Put another way :
$$
(1- A^{2 kq-2} ) (p ,q+1)_T = (-A^{kq-2} + A^{-kq} ) (1-p ,q-1)_T
$$
Since $q>1$, we have that $2kq-2>0$ and $1- A^{2 kq-2}  \neq 0$.\\
We end up with the fact that :
$$
\mathcal{H}_k = Span_{\mathbb{Q} (A)} (\{ (p,0)_T \ p\in \mathbb{Z} \} \cup \{ (0,1)_T ,(1,1)_T ,(0,2)_T ,(1,2)_T \})
$$

Applying Relation (\ref{Dependance2}) with $q=1$ gets $(p,0)_T \in Span_{\mathbb{Q} (A) } (\{ (p+k,2)_T , (p, 2)_T , (p-k,0)_T \})$.\\
After enough use of this relation we have that $$(p,0)_T \in Span_{\mathbb{Q} (A)} \left(\left\{ (l,0)_T \ \vert\ -  \left\lfloor \dfrac{k}{2} \right\rfloor \leq l\leq \left\lfloor \dfrac{k}{2} \right\rfloor \right\} \cup \left\{ (l,2)_T \ \vert \ l \in \mathbb{Z} \right\} \right) $$
Using $(-p,0)_T=(p,0)_T$ to the elements $(l,0)_T$ with $l<0$ and (\ref{ligne}) to the elements $(l,2)_T$, we get that :
$$\forall p \in\mathbb{Z},\ (p,0)_T \in Span_{\mathbb{Q} (A)} (\{ (l,0)_T \ \vert\ 0\leq l\leq \left\lfloor \dfrac{k}{2} \right\rfloor \} \cup \{ (0,2)_T,(1,2)_T\} ) 
$$
Since $k$ is odd, because of the formula (\ref{Dependance1}), $(1,1)_T = B_k ((1,1)_T )= (k+1 ,1)_T=(0,1)_T$. \\
At the end,
$$
\mathcal{H}_k = Span_{\mathbb{Q} (A)} (\{ (p,0)_T \ \vert \ 0\leq p \leq \left\lfloor \dfrac{k}{2} \right\rfloor \} \cup \{ (0,1)_T ,(0,2)_T ,(1,2)_T \})
$$
The only two differences if $k$ were even would be that $(1,1)_T$ would have remain necessary in the generating set and we would have $p$ instead of $1-p$ in (\ref{kodd}), which would end that step directly.
\end{proof}
\section{Computation of \texorpdfstring{$ev_{M_k}$}{}}\label{Sec:ev}
We now focus on the computations of the images of the elements of the basis of Section \ref{Sec:Basis}.\\
Since it is not much harder to compute the images of $ev$ on $K_\emptyset$ for a general $k$ and any $(p,q)_T$ with $q\neq 0$, we will continue this section with this setting.
\subsection{The Reshetikhin–Turaev TQFT}\label{subsec:TQFT}
Let us start by recalling the definition of the category of extended cobordisms in dimension 2+1 :

Its objects are pairs $(\Sigma , L )$ where $\Sigma$ is an oriented compact closed surface together with a Lagrangian $L\subset H_1 (\Sigma ,\mathbb{Q} )$. Its morphisms are $(M,K,n) \in Hom ((\Sigma_1 ,L_1),(\Sigma_2 ,L_2 ))$ where $M$ is a 3-manifold equipped with a fixed homeomorphism $\partial M \simeq \Sigma_1 \sqcup \Sigma_2$, a framed link $K\subset M$, and $n\in\mathbb{Z}$. $n$ is called the weight of $M$ and morally represents a choice of signature of a $4$-manifold with boundary $M$.\\
Moreover, a TQFT is a monoidal functor from the category of extended cobordisms in dimension 2+1 to the category of finite dimensional $\mathbb{C}$-vector spaces.

In \cite{BHMV2}, the 3-manifold invariant $RT_\xi$ is extended to a TQFT :

\begin{theorem}\label{Thm:TQFT}\cite{BHMV2}
Let $\xi =e^{\frac{is\pi}{r}} \in\mathbb{U}_0$ be a primitive root of unity of even order.\\
Then there exists a TQFT functor $RT_{\xi}$ in dimension 2+1 satisfying :
\begin{enumerate}
	\item For any oriented closed surface $\Sigma$ with a choice of Lagrangian $L\subset H_1 (\Sigma ,\mathbb{Q} )$, $RT_\xi (\Sigma )$ is a finite-dimensional $\mathbb{C}$-vector space such that each extended 3-manifold $(N,K,n )$ with an homeomorphism $\partial N \simeq \Sigma$, corresponds a vector in $RT_\xi (\Sigma )$. Moreover, $RT_\xi (\Sigma )$ is spanned by such vectors.
	\item Recall that the gluing of two extended 3-cobordisms $(M,K,n)$, from $(\Sigma_1 , L_1 )$ to $(\Sigma_2 ,L_2 )$ and $(M',K', m)$ from $(\Sigma_2 ,L_2 )$ to $(\Sigma_3,L_3)$, is the extended closed 3-cobordism $((M,K) \underset{\Sigma}{\sqcup} (M',K'), n+m-\mu )$ where $\mu\in\mathbb{Z}$, the Maslov index, depends only on $L_1,L_2,L_3$. Then : 
$$RT_{\xi} ((M,K,n) \underset{\Sigma}{\sqcup} (M',K',m) ) = \kappa^{n+m-\mu } RT_{\xi} (M,K,n) \circ RT_{\xi} (M',K',m)$$
Also $RT_{\xi} (M,K,n) =\kappa^n RT_\xi (M,K,0)$ and
$\kappa$ is called the anomaly of the TQFT $RT_{\xi}$.
	\item The extended mapping class group $\widetilde{Mod}(\Sigma )$, acting on extended 3-manifolds with
boundary $(\Sigma, L)$ gives rise to a representation
$$\widetilde{Mod} (\Sigma ) \rightarrow Aut(RT_{\xi} (\Sigma , L))$$
	\item For any oriented closed 3-manifold $M$ and skein element $L$ in $M$, 
$$RT_{\xi} (M,L) \in Hom( \mathbb{C} , \mathbb{C} ) \simeq \mathbb{C}$$
and $RT_{\xi} (M,L)$ is the topological invariant introduced in Section \ref{sec:Intro}.
\end{enumerate}
\end{theorem}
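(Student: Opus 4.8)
This is the foundational construction of \cite{BHMV2}, so I would reproduce their skein-theoretic approach rather than attempt something new. The plan is to build everything from the Kauffman bracket. First I would recall that the bracket, suitably normalized, yields a $\mathbb{C}$-valued invariant of framed links in $S^3$; the essential algebraic ingredient is the family of Jones--Wenzl idempotents in the Temperley--Lieb algebra, which remain well-defined precisely because $\xi$ has even order $2r$, so that the relevant quantum integers do not vanish in the admissible range. Coloring components by these idempotents gives the colored bracket, and forming the \emph{Kirby color} $\omega = \sum_i \langle e_i\rangle\, e_i$ (the sum over admissible colors weighted by their quantum dimensions) produces an element with the encircling property: a strand passing through a component colored by $\omega$ can be slid off. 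This is the key lemma behind invariance under handle slides.

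Next I would promote this to a closed $3$-manifold invariant. Presenting $M$ by surgery on a framed link $L\subset S^3$, one sets $RT_\xi(M,K)$ to be the $\omega$-colored bracket of $L$ together with the uncolored link $K$, divided by suitable powers of the bracket of the $\pm 1$-framed unknot. Invariance under handle slides is the encircling lemma above; invariance under blow-ups forces the normalization factor and is exactly where the anomaly $\kappa$ enters, since a blow-up changes the signature of a bounding $4$-manifold by $\pm 1$. Tracking this signature is what makes the invariant depend on the integer weight $n$, giving $RT_\xi(M,K,n) = \kappa^n RT_\xi(M,K,0)$ of item (2). Because the colored bracket depends only on the skein class of $K$, item (4) and the fact that $RT_\xi(M,L)$ depends only on $[L]\in K(M)$ follow at once.

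To obtain the vector spaces of item (1), I would apply the universal construction: for a surface $(\Sigma,L)$, let $V(\Sigma)$ be the free $\mathbb{C}$-vector space on extended $3$-manifolds bounding $\Sigma$, equipped with the pairing sending a pair to $RT_\xi$ of the closed manifold obtained by gluing them along $\Sigma$, and set $RT_\xi(\Sigma,L)$ to be the quotient by the radical of this pairing. A bounding manifold then tautologically represents a vector, and these span by construction. The gluing formula of item (2), including the Maslov-index correction $\mu$, becomes the statement that the pairing is compatible with composition of cobordisms up to the anomaly; the appearance of $\mu$ is a cocycle computation comparing the Lagrangians $L_1,L_2,L_3$ and is forced by the framing defect measured by $\kappa$. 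Item (3) is then automatic from functoriality: the mapping class group acts via the mapping cylinders of its elements, giving linear automorphisms of $RT_\xi(\Sigma,L)$.

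The main obstacle is showing that the pairing has finite-dimensional quotient, that is, that $RT_\xi(\Sigma)$ is finite-dimensional, and simultaneously that it is nondegenerate on a good spanning set. I would handle this via a handlebody model: fixing a pants decomposition of $\Sigma$, colored trivalent graphs in a bounding handlebody give an explicit spanning family indexed by admissible colorings, and one computes the Gram matrix of the pairing on this family using the fusion and twist coefficients (the $6j$-symbols and the $S$- and $T$-matrices). Proving that this Gram determinant is nonzero at $\xi$---which is precisely where the even order of the root and the truncation of colors at level $r$ are used---yields both finite-dimensionality and the spanning statement of item (1). Pinning down the exact power of $\kappa$ and the Maslov index in the gluing law is the other delicate point, requiring careful bookkeeping of the signatures of the $4$-dimensional cobordisms realizing the gluing.
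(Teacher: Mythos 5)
This statement is imported by the paper directly from \cite{BHMV2} and is given no proof in the text, so the ``paper's approach'' is precisely the construction you outline: your sketch (Kauffman bracket and Jones--Wenzl idempotents, Kirby color and handle-slide invariance, surgery invariant with signature-driven anomaly $\kappa$, universal construction for the vector spaces, colored-graph spanning sets with nondegenerate pairing, and Maslov-index bookkeeping for the gluing law) is a faithful summary of the BHMV construction being cited. Your proposal is correct and takes essentially the same route as the source the paper relies on.
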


\begin{remark}\label{Remark:TQFT}
Since we will only be interested in the linear independence of the family $\{ RT_\xi \}$, we will ignore the anomaly $\kappa$ and fix the choice of Lagrangian in $H_1 (\mathbb{T}^2 , \mathbb{Q} )$ to always be the subspace generated by the class of the meridian in $\mathbb{T}^2$ and therefore no longer make reference to the choice of Lagrangian.
\end{remark}

Let $z\in K(\mathbb{S}^1 \times \mathbb{D}^2 )$ be represented by the core of $\mathbb{S}^1\times\mathbb{D}^2 $.\\
Recall that $K(\mathbb{S}^1 \times \mathbb{D}^2)$ has a $\mathbb{Q} (A) [z]$ algebra structure defined by the operation of stacking on the boundary surface and define the Chebychev polynomials of the second kind :

\begin{equation}\label{Chebychev2}
\left\lbrace \begin{array}{lrcl}
S_0 =0,\ S_1 =1  \\ 
\forall n\in\mathbb{Z},\ S_{n+2} = X S_{n+1} - S_{n}\end{array}\right.
\end{equation}

We have the following basis for $RT_\xi (\mathbb{T}^2 )$ :

\begin{theorem}\cite[Corollary 4.10]{BHMV2}\label{Thm:BHMV}\\
For $j\in\mathbb{Z}$, set $e_j := (\mathbb{S}^1 \times \mathbb{D}^2 , S_{j} (z)) \in RT_\xi (\mathbb{T}^2 )$.\\
Let $\xi$ be a primitive $2r$-root of unity, then :
\begin{itemize}
	\item 
If $r$ is odd, $\{ e_j \ \vert\ 1\leq j \leq  \frac{r-1}{2}  \}$ is a basis of $RT_\xi (\mathbb{T}^2 )$.
	\item
If $r$ is even, $\{ e_j \ \vert\ 1\leq j \leq  r-1  \}$ is a basis of $RT_\xi (\mathbb{T}^2 )$.
\end{itemize}
Moreover, $e_r=0$.
\end{theorem}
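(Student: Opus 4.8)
The plan is to realise $RT_\xi(\mathbb{T}^2)$ as an explicit cyclic quotient of the skein algebra of the solid torus and to pin down the single polynomial relation generating the kernel. By part (1) of Theorem \ref{Thm:TQFT} the space $RT_\xi(\mathbb{T}^2)$ is spanned by the vectors attached to $(\mathbb{S}^1\times\mathbb{D}^2,K)$, and since $K(\mathbb{S}^1\times\mathbb{D}^2)\cong\mathbb{Z}[A,A^{-1}][z]$ is the polynomial algebra on the core $z$, with $\{S_j(z)\}_{j\geq 0}$ a basis, the family $\{e_j\}_{j\geq 1}$ spans (note $e_0=S_0(z)=0$). Because $e_j=S_j(z)\cdot e_1$ with $e_1=1$, the space is cyclic for the action of $z$, hence isomorphic to $\mathbb{C}[z]/(P)$ for a single polynomial $P$; the statement then reduces to identifying $P$ and reading off the corresponding basis among the $S_j(z)$.

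First I would prove the vanishing $e_r=0$. As $\xi$ has order $2r$, the element $A^2=\xi^2$ is a primitive $r$-th root of unity, so the quantum integer $[r]=\frac{A^{2r}-A^{-2r}}{A^2-A^{-2}}$ vanishes; this is exactly the degeneration of the Jones--Wenzl idempotent that pushes the $r$-coloured core $S_r(z)$ into the radical of the TQFT pairing, giving $e_r=0$. Feeding this into the recursion $S_{n+2}=zS_{n+1}-S_n$ gives $e_{r+j}=-e_{r-j}$ for every $j$, so the spanning set already reduces to $\{e_1,\dots,e_{r-1}\}$, i.e. $S_r(z)$ is a multiple of $P$. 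When $r$ is even this is the whole story: $P=S_r(z)$ has degree $r-1$, and $\{S_1(z),\dots,S_{r-1}(z)\}=\{e_1,\dots,e_{r-1}\}$ is the asserted basis of $\mathbb{C}[z]/(S_r)$.

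The delicate, parity-sensitive point, which I expect to be the main obstacle, is the extra relation in the odd case. The mechanism is that the top colour $e_{r-1}$ is an order-two invertible object whose quantum dimension is $S_{r-1}(-A^2-A^{-2})=(-1)^{r+1}$: for odd $r$ this equals $+1$, and tensoring by $e_{r-1}$ implements the reflection $j\mapsto r-j$ on labels with no sign, forcing $e_{r-j}=e_j$ in $RT_\xi(\mathbb{T}^2)$; for even $r$ the value is $-1$, the invertible object is genuinely nontrivial, and no such collapse occurs. To make this precise I would compute the Hermitian pairing $\langle e_a,e_b\rangle$, obtained by gluing two solid tori and evaluating the resulting closed-manifold invariant, whose Gram matrix is (up to normalisation) the table of coloured Hopf-link invariants $(-1)^{a+b}[ab]$. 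Using the periodicity $[n+r]=[n]$ and the antisymmetry $[r-n]=-[n]$ that follow from $A^2$ having order $r$, one checks that $\langle e_{r-j},e_b\rangle=(-1)^{r+1}\langle e_j,e_b\rangle$ for all $b$; hence for odd $r$ the vector $e_{r-j}-e_j$ lies in the radical and dies in the quotient, leaving $\{e_1,\dots,e_{(r-1)/2}\}$, which corresponds to $P=S_{(r+1)/2}(z)-S_{(r-1)/2}(z)$ of degree $(r-1)/2$.

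It remains to prove linear independence of the proposed basis, i.e. that $P$ has exactly the stated degree and not smaller. This follows from nondegeneracy of the TQFT pairing on the surviving labels, of rank $r-1$ (even $r$) or $(r-1)/2$ (odd $r$); the restricted Gram matrix is a sine/Gauss-sum matrix whose full rank I would establish by a direct root-of-unity computation. This rank computation is the crux: it is precisely in the arithmetic of $\xi$ that the dichotomy between $\dim=r-1$ and $\dim=(r-1)/2$ is forced. The even case additionally requires care, since the Hopf table itself degenerates there because of the nontrivial transparent object, so independence must be read off the refined pairing built into the construction rather than from the Hopf invariants alone; everything else is formal manipulation of the Chebyshev recursion.
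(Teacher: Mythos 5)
The paper offers no proof of this statement to compare against: it is quoted directly from \cite[Corollary 4.10]{BHMV2}, so your attempt is a reconstruction of a literature result, and it has to stand on its own. It does not, and the failure is concentrated exactly where you flag discomfort: the even case. Your architecture rests on two principles: (a) $RT_\xi(\mathbb{T}^2)$ is spanned by the solid-torus skein vectors $e_j$ (already an overstatement of part (1) of Theorem \ref{Thm:TQFT}, which gives spanning by vectors of \emph{all} bounding extended $3$-manifolds; cutting down to solid tori needs the surgery presentation of the theory), and (b) a combination of the $e_j$ vanishes in $RT_\xi(\mathbb{T}^2)$ if and only if it pairs to zero with every $e_b$ under the Hopf-link Gram pairing evaluated at $A$ with $A^2=\xi^2$. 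You invoke (b) to get $e_r=0$ and, for odd $r$, the collapse $e_{r-j}=e_j$. But for even $r$ one has $\xi^r=-1$, hence $[r/2]=\frac{\xi^{r}-\xi^{-r}}{\xi^{2}-\xi^{-2}}=0$: the quantum integers already degenerate at $r/2$, so $e_{r/2}$ --- which the statement asserts is a \emph{basis} vector --- has quantum dimension zero and satisfies $\langle e_{r/2},e_b\rangle=\pm\,[(r/2)b]=0$ for every $b$; likewise your own identity $\langle e_{r-j},e_b\rangle=(-1)^{r+1}\langle e_j,e_b\rangle$ places every $e_{r-j}+e_j$ in the radical when $r$ is even. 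So principle (b), applied uniformly, proves $e_{r/2}=0$ and $\dim RT_\xi(\mathbb{T}^2)\le r/2-1$ for even $r$, flatly contradicting the theorem. You cannot use (b) to establish $e_r=0$ and the odd case and then disown it in the even case by appeal to an unspecified ``refined pairing''; as written, the proposal is internally inconsistent rather than merely incomplete.

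The root cause is a misidentification of the skein variable, which is not a cosmetic normalization but the entire content of the dichotomy in the statement. When $\xi$ has order $2r\equiv 2\pmod 4$ ($r$ odd), the theory is the $SO(3)$-type one and your picture (bracket data at $\pm\xi$, first degeneration $[r]=0$, simple current of quantum dimension $(-1)^{r+1}=+1$ forcing $e_{r-j}=e_j$, invertible $\frac{r-1}{2}\times\frac{r-1}{2}$ sine matrix, and the factorization $S_r=\bigl(S_{\frac{r+1}{2}}-S_{\frac{r-1}{2}}\bigr)\bigl(S_{\frac{r+1}{2}}+S_{\frac{r-1}{2}}\bigr)$) is essentially sound. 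When $\xi$ has order $2r\equiv 0\pmod 4$, however, the theory of \cite{BHMV2} underlying $RT_\xi$ is the $SU(2)$-type theory whose Kauffman-bracket data lives at a primitive $4r$-th root of unity (of which $\xi$ is essentially the square): there the first vanishing quantum integer is $[r]$, the colours genuinely run through $e_1,\dots,e_{r-1}$, and the Hopf matrix is nondegenerate of rank $r-1$. None of this is visible if one evaluates brackets at $A=\pm\xi$, which is why your Gram-matrix computation degenerates. Repairing the even case therefore requires first identifying which TQFT $RT_\xi$ actually is at roots of order divisible by $4$ --- i.e.\ redoing a substantial piece of the BHMV construction --- not a rank computation on the matrix you wrote down; this is precisely why the paper cites the result instead of proving it.
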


\begin{remark}\label{Remark:Periodicity}
Let $j\in\mathbb{Z}$. One can deduce from the induction formula and the fact that $e_r =0$ that $e_{r+j} = -e_{r-j}$.
Moreover, it was established in \cite[Lemma 6.3.]{BHMV1} that if $r$ is odd, then $e_{\frac{r-1}{2} + j} = e_{\frac{r-1}{2} +1 - j}$.
\end{remark}

Using this set, we compute the actions of the different cobordism applications involved.
\subsection{Computations}\label{subsec:computation_ev}

The action of the mapping cylinder of $B_k$ on the basis of Theorem \ref{Thm:BHMV} can be deduced from Theorem \ref{Thm:TQFT} :

\begin{lemma}\label{Lemma:rho}
Let $\rho (B_k )$ be the representation of the mapping cylinder of $B_k$ in $Aut (RT_{\xi} (\mathbb{T}^2 ))$.\\
The action of $\rho (B_k )$ on the basis $\{ e_i \}$ in $RT_{\xi} (\mathbb{T}^2 )$ is :
$$\rho (B_k) (e_j ) = (-\xi )^{k(j^2 -1)} e_j$$
\end{lemma}

\begin{proof}
By noticing that $\rho (B_1) (z)$ is the effect of a simple Dehn twist on $z$, one can recognize the formula from \cite[p.690-691]{BHMV1} : $\rho (B_1) (e_j) = (-\xi )^{j^2 -1} e_j$.\\
Thus, $\rho (B_k ) (e_j) = \rho (B_1 )^k (e_j) = ((-\xi )^{j^2 -1} )^k e_j =(-\xi )^{k(j^2 -1)} e_j$
\end{proof}
The second part of the cobordism application that we will consider is the following :

\begin{definition}\label{def:CobordismApplication}
For $p,q\in\mathbb{Z}$, let $Z((p,q)_T ))$ be the cobordism application associated to $(p,q)_T\in \mathcal{H}_k$.\smallbreak
For instance, if $m=(1,0)$ is the meridian of the 2-torus, $Z(m)e_j$ is the operation of stacking $m$ on the boundary of $e_j$.
\end{definition}

And then one can compute its action on $e_j$ :

\begin{lemma}\label{Lemma:Action(p,q)}
Let $p,q$ be integers.\\
The action of $Z((p,q)_T )$ on the basis $\{ e_j \}$ is :
$$
Z((p,q)_T ) e_j = (-1)^{p} (\xi^{2pj +pq} e_{j+q} + \xi^{-2pj+pq} e_{j-q})
$$
\end{lemma}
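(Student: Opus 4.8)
The plan is to exploit that stacking curves on the boundary makes $Z$ an action of the skein algebra $K(\mathbb{T}^2)$ on $RT_\xi(\mathbb{T}^2)$ (the finite-dimensional quotient of $K(\mathbb{S}^1\times\mathbb{D}^2)$ spanned by the $e_j$), so that $Z(\alpha\star\beta)=Z(\alpha)\circ Z(\beta)$, and that this action is equivariant for the mapping class group: ignoring the anomaly (Remark \ref{Remark:TQFT}), $Z(\phi(\gamma))=\rho(\phi)\,Z(\gamma)\,\rho(\phi)^{-1}$ for $\phi\in Mod(\mathbb{T}^2)$. Granting these two structural facts, I would reduce the statement to computing $Z$ on the two ``seed'' classes given by the core $(0,1)_T$ and the meridian $(1,0)_T$, and then propagate to every $(p,q)_T$ using the Frohman--Gelca product-to-sum formula (Theorem \ref{Thm:FormulaT}), conjugation by the twist $\rho(B_1)$ (Lemma \ref{Lemma:rho}), and the Chebyshev recursions \eqref{Chebychev1}--\eqref{Chebychev2}.

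For the seeds: since $(0,1)_T=z$ is the core, $Z((0,1)_T)e_j=z\,S_j(z)=S_{j+1}(z)+S_{j-1}(z)=e_{j+1}+e_{j-1}$ by \eqref{Chebychev2}. For the meridian, pushing $(1,0)_T$ off the boundary into the solid torus turns it into a single loop encircling the core once; since $e_j=S_j(z)$ is the core cabled by the $(j-1)$-st Jones--Wenzl idempotent, the standard Kauffman-bracket encircling computation (cf. \cite{BHMV2}) shows that a single loop around a strand of colour $j-1$ acts by the scalar $-(\xi^{2j}+\xi^{-2j})$, i.e. $Z((1,0)_T)e_j=-(\xi^{2j}+\xi^{-2j})e_j$. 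This encircling eigenvalue, with its sign and its colour shift $j-1\leftrightarrow e_j$, is the one genuinely new input; everything else is formal.

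From here I would propagate in three steps. For the $q=0$ line, multiplicativity and $(p,0)_T=T_p((1,0)_T)$ give that $Z((p,0)_T)$ acts on $e_j$ by $T_p\bigl(-(\xi^{2j}+\xi^{-2j})\bigr)$; writing $-(\xi^{2j}+\xi^{-2j})=u+u^{-1}$ with $u=-\xi^{2j}$ and using $T_p(u+u^{-1})=u^{p}+u^{-p}$ yields $(-1)^p(\xi^{2pj}+\xi^{-2pj})e_j$, which is the claimed formula at $q=0$ (and correctly returns $2e_j$ at $(0,0)$ since $T_0=2$). For the $q=1$ line, since $(p,1)_T=B_1^{\,p}(0,1)_T$, equivariance gives $Z((p,1)_T)=\rho(B_1)^{p}\,Z((0,1)_T)\,\rho(B_1)^{-p}$; feeding in $\rho(B_1)^{p}e_j=(-\xi)^{p(j^2-1)}e_j$ from Lemma \ref{Lemma:rho} together with the core formula produces coefficients $(-\xi)^{p(2j\pm1)}$ on $e_{j\pm1}$, and the parity simplification $(-1)^{p(2j\pm1)}=(-1)^p$ gives exactly the claimed formula at $q=1$. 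Finally I would induct on $q$: applying Theorem \ref{Thm:FormulaT} to $(0,1)_T\star(p,q)_T$ and using multiplicativity yields the three-term recursion
\[ Z((0,1)_T)\,Z((p,q)_T)=\xi^{-p}\,Z((p,q+1)_T)+\xi^{p}\,Z((p,q-1)_T) \]
(under the evaluation $A=\xi$ in $RT_\xi$), which together with the two base lines $q=0,1$ determines all $Z((p,q)_T)$ with $q\geq 0$; a direct substitution checks that the claimed formula solves this recursion, and the cases with $q<0$ follow from $(p,q)_T=(-p,-q)_T$.

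The main obstacle is the meridian seed of the second paragraph — pinning down the encircling eigenvalue with the correct sign and colour indexing. The remaining difficulties are only bookkeeping: tracking the evaluation $A=\xi$, the normalization $(0,0)_T=T_0=2$, and the fact that the indices $j\pm q$ may fall outside the basis range of Theorem \ref{Thm:BHMV}. This last point I would handle by performing all manipulations with $e_j=S_j(z)$ as polynomial identities in $K(\mathbb{S}^1\times\mathbb{D}^2)$ over $\mathbb{Q}(A)$ and only then projecting onto $RT_\xi(\mathbb{T}^2)$, so that the relations $e_r=0$ and $e_{r+j}=-e_{r-j}$ of Remark \ref{Remark:Periodicity} are automatically respected.
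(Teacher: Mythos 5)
Your proposal is correct, and its skeleton coincides with the paper's: both rest on the same two seed computations $Z((1,0)_T)e_j=-(\xi^{2j}+\xi^{-2j})e_j$ and $Z((0,1)_T)e_j=e_{j+1}+e_{j-1}$, plus the fact that $Z$ is an algebra homomorphism for the stacking product, so that the Frohman--Gelca formula of Theorem \ref{Thm:FormulaT} governs all products. The difference is in how you propagate from the seeds. The paper does it in one stroke: it verifies directly that the claimed family of formulas is closed under the product-to-sum expansion --- if the formula holds for $(p,q)_T$ and $(r,s)_T$, then $Z((p,q)_T)\circ Z((r,s)_T)$ equals $\xi^{ps-qr}$ times the formula for $(p+r,q+s)_T$ plus $\xi^{qr-ps}$ times the formula for $(p-r,q-s)_T$ --- so the claimed formula defines an algebra map agreeing with $Z$ on the generators $(1,0)_T,(0,1)_T$, hence equals $Z$. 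You instead run a three-stage induction: the $q=0$ line via the Chebyshev identity $T_p(u+u^{-1})=u^p+u^{-p}$, the $q=1$ line via conjugation by $\rho(B_1)$, then the recursion in $q$ coming from $(0,1)_T\star(p,q)_T$. This is sound, and each step checks out (in particular your recursion and its solution are consistent with the product-to-sum formula). Note that your $q=1$ step genuinely needs the extra equivariance input $Z(\phi(\gamma))=\rho(\phi)Z(\gamma)\rho(\phi)^{-1}$ (standard, and unproblematic since the anomaly cancels under conjugation): the recursion applied to the $q=0$ line only produces the combination $\xi^{-p}Z((p,1)_T)+\xi^{p}Z((p,-1)_T)$, which cannot be separated into its two unknown terms for all $\xi$, so this extra ingredient is not removable from your route. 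In exchange, your verifications are each short, whereas the paper pays with a single long computation but uses nothing beyond multiplicativity. Your final remark --- working with $e_j=S_j(z)$ for all $j\in\mathbb{Z}$ in $K(\mathbb{S}^1\times\mathbb{D}^2)$ and projecting to $RT_\xi(\mathbb{T}^2)$ only at the end --- is exactly the right way to make sense of indices $j\pm q$ outside the basis range, and matches the conventions of Remark \ref{Remark:Periodicity}.
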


\begin{proof}
If one considers the morphism $Z : K(\mathbb{T}^2 , \mathbb{Q} (A) )\rightarrow Hom(RT_{\xi} (\mathbb{T}^2 ) ,RT_{\xi} (\mathbb{T}^2 ))$, since $(0,1)_T$ and $(1,0)_T$ span $K(\mathbb{T}^2 , \mathbb{Q} (A))$ as an algebra, it suffices to compute the actions of $Z ((0,1)_T )$, $Z((1,0)_T )$ and their compositions on the basis $\{ e_j \ \vert\ 1\leq j\leq n \}$ to show that it coincides with the above formula.\\
First it is proven in \cite[p.690-691]{BHMV1} that $$Z((1,0)_T ) e_j  = -(\xi^{2j} + \xi^{-2j} ) e_j$$
Moreover, $Z((0,1)_T ) e_j = z e_j = e_{j+1} + e_{j-1}$.\\
And now, if, for some $p,q,s,t\in\mathbb{Z}$ and for all $j$, $$Z((s,t)_T ) e_j = (-1)^{s} (\xi^{2sj +rt} e_{j+t} + \xi^{-2sj+st} e_{j-t})$$ and $$Z((p,q)_T ) e_j = (-1)^{p} (\xi^{2pj +pq} e_{j+q} + \xi^{-2pj+pq} e_{j-q})$$ then a direct computation gives that :
\begin{align*}
&Z((p,q)_T ) \circ Z((s,t)_T ) e_i \\
&= (-1)^{p+s} ( \xi^{pt -sq} \xi^{2(p+s)j +(p+s)(q+t)} e_{j+t+q} + \xi^{pt -sq} \xi^{-2(p+s)j + (p+s)(q+t)} e_{j-t-q})\\
& + (-1)^{p-s} ( \xi^{sq- pt} \xi^{-2(p-s)j + (p-s)(q-t)} e_{j+t-q} + \xi^{sq- pt} \xi^{-2(p-s)j +(p-s)(q-t)} e_{j-t+q}  )
\end{align*} 
Which is what is expected of $$Z((p,q)_T \star (s,t)_T ) e_j = \xi^{pt-qs} Z((p+s,q+t)_T ) e_j + \xi^{qs-pt} Z((p-s,q-t)_T ) e_j$$
\end{proof}

Theorem \ref{Thm:Main} will be a consequence of the following computations :

\begin{lemma}\label{Lemma:EvCasParticuliers}
Let $p,q$ be integers so that $q\neq 0$. For almost all $\xi\in\mathbb{U}_0$ of order $2r$ :\\
If $q$ is even :
$$ev ( (p,q)_T ) (\xi ) = \left\lbrace\begin{array}{lrcl}
2 (-1)^{p+1} (-\xi )^{k((\frac{q}{2})^2 -1)} \textrm{ if } r \textrm{ is even} \\
(-1)^{p+1} (-\xi )^{k((\frac{q}{2})^2 -1)} \textrm{ if }r\textrm{ is odd} \end{array}\right.$$
And if $q$ is odd :
$$ev ( (p,q)_T ) (\xi ) = \left\lbrace\begin{array}{lrcl}
0 \textrm{ if } r \textrm{ is even} \\
(-\xi)^{k((\frac{r+q}{2} )^2 -1) } \textrm{ if }r\textrm{ is odd} \end{array}\right.$$
\end{lemma}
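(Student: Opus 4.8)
The plan is to identify $ev_{M_k}((p,q)_T)(\xi)$ with the Reshetikhin--Turaev invariant $RT_\xi(M_k,(p,q)_T)$ and to compute the latter as a trace over $RT_\xi(\mathbb{T}^2)$. Since $M_k$ is the mapping torus of $B_k$ and the multicurve $(p,q)_T$ sits in the fiber $\mathbb{T}^2\times\{\tfrac12\}$, cutting along this fiber presents $M_k$ as the mapping cylinder of $B_k$ with the link-insertion operator stacked in the middle. By the functoriality of Theorem \ref{Thm:TQFT} (and ignoring the anomaly $\kappa$ as permitted by Remark \ref{Remark:TQFT}), this yields $ev_{M_k}((p,q)_T)(\xi)=\mathrm{Tr}_{RT_\xi(\mathbb{T}^2)}\bigl(\rho(B_k)\circ Z((p,q)_T)\bigr)$. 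I would then substitute the two explicit actions already at hand: $\rho(B_k)$ is diagonal in the basis of Theorem \ref{Thm:BHMV} with $\rho(B_k)e_j=(-\xi)^{k(j^2-1)}e_j$ (Lemma \ref{Lemma:rho}), while $Z((p,q)_T)e_j=(-1)^p(\xi^{2pj+pq}e_{j+q}+\xi^{-2pj+pq}e_{j-q})$ (Lemma \ref{Lemma:Action(p,q)}).

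Because $Z((p,q)_T)$ shifts the index by $\pm q$ with $q\neq0$, a given summand contributes to the diagonal of $\rho(B_k)\circ Z((p,q)_T)$ only for those $j$ for which $e_{j+q}$ or $e_{j-q}$ reduces, via the relations of Remark \ref{Remark:Periodicity}, to a scalar multiple of $e_j$. The core of the proof is therefore solving the resulting index-matching congruences against the appropriate basis. For $r$ even, using $e_r=0$ and $e_{r+m}=-e_{r-m}$ (equivalently $2r$-periodicity together with $e_{-m}=-e_m$), one has $e_{j\pm q}=\pm e_j$ exactly when $2j\equiv\mp q\pmod{2r}$; this congruence is solvable precisely when $q$ is even, and then it has two basis solutions $j\equiv\pm q/2$, which is the origin of the factor $2$, and none when $q$ is odd, forcing the trace to vanish. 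For $r$ odd, the additional identification recorded in Remark \ref{Remark:Periodicity} folds the index range further, so that the matching congruence becomes solvable modulo $r$ and yields a single surviving index in each parity of $q$; this is why the $r$ odd answers are single monomials rather than doubled sums.

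Once the surviving index is pinned down, I would substitute it back and collapse the phases. The decisive input is $\gcd(s,2r)=1$, so that $s$ is odd and hence $\xi^{r}=(-1)^s=-1$; combined with the parity of $r$ this lets one simplify both the prefactor $\xi^{\pm2pj+pq}$ and the eigenvalue $(-\xi)^{k((j\pm q)^2-1)}$ into the claimed closed forms. Concretely, the even-$q$ contributions assemble (after the sign $(-1)^p$ from $Z$ and the folding sign $-1$) into $(-1)^{p+1}(-\xi)^{k((q/2)^2-1)}$, doubled when $r$ is even; the odd-$q$, $r$-odd contribution lands on the single power based at index $\tfrac{r+q}{2}$, which is where the exponent $k(\tfrac{r+q}{2})^2$ and the factor $(-1)^{kq}$ in the statement come from; and the odd-$q$, $r$-even case is $0$ by the unsolvability noted above.

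The main obstacle is precisely this bookkeeping of the reduction of $e_{j\pm q}$ to the chosen basis: getting every sign right when invoking the $r$-odd relation of Remark \ref{Remark:Periodicity}, confirming that exactly one (respectively two) indices survive in each case, and then verifying that the accumulated phases genuinely collapse to one monomial rather than leaving a residual $\xi$-dependent factor. I would also restrict attention to all but finitely many $\xi$ (equivalently, to $r$ large compared with $q$), which is harmless since $ev$ takes values in $\mathbb{C}^{\mathbb{U}_0}_{a.e.}$, so that the congruences have exactly the stated number of solutions and no spurious index coincidences occur.
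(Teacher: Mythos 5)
Your proposal is correct and follows the paper's own proof essentially step for step: the paper likewise writes $ev((p,q)_T)(\xi)=\mathrm{Tr}\bigl(\rho(B_k)\circ Z((p,q)_T)\bigr)$, substitutes Lemmas \ref{Lemma:rho} and \ref{Lemma:Action(p,q)}, and isolates exactly the surviving indices you identify (namely $j=\tfrac{q}{2}$ and $j=r-\tfrac{q}{2}$ when $r$ is even, a single index $j=\tfrac{q}{2}$ or $j=\tfrac{r-q}{2}-1$ when $r$ is odd, and none when $q$ is odd and $r$ is even) before collapsing the phases using that $\xi$ has order $2r$. Even your caveat about working with $r$ large relative to $q$ matches the paper's remark that ``$r>2q$ should be enough,'' which is harmless since equality is only needed almost everywhere on $\mathbb{U}_0$.
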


\begin{proof}
Since our obstruction will come from the case $q$ even, we only do the computations for this case. When $q$ is odd, the proof is similar.\\
Let $r>2q$.

Theorem \ref{Thm:TQFT} implies that (see for instance \cite[§1.2]{BHMV2}),
$$RT_\xi (M_k , Z((p,q)_T )) = Tr (\rho (B_k ) \circ Z((p,q)_T ))$$
And because of Lemmas \ref{Lemma:rho} and \ref{Lemma:Action(p,q)}, 
\begin{equation}\label{SumarizeFormulasEv}
\rho (B_k ) \circ Z((p,q)_T ) e_j = (-1)^{p+k(j+q -1)} (\xi^{p(2j+q) +k((j+q)^2 -1 )} e_{j+q} + \xi^{p(-2j +q) +k((j-q)^2 -1 )} e_{j-q} )
\end{equation}
When $r$ is even, we know from Remark \ref{Remark:Periodicity} how $e_{j+q}$ and $e_{j-q}$ are expressed in terms of the basis $\{ e_j \}_{1 \leq j \leq r-1 }$ given in Theorem \ref{Thm:BHMV}.\\
Then, since $q$ is even and $r>2q$, the only contribution to the trace are coming from the case when $j$ is such that $e_{j-q} = - e_j$ (when $j=\frac{q}{2}$) and $e_{j+q} = - e_j$ (when $j=r-\frac{q}{2}$).

Thus (still when $r$ is even) : \begin{align*}
ev((p,q)_T)(\xi ) &= (-1)^{p+1} (-\xi)^{k((\frac{q}{2})^2 -1 )} -(-1)^{p+k((r-\frac{q}{2})+q -1)} \xi^{p(2(r-\frac{q}{2})+q) +k(((r-\frac{q}{2})+q)^2 -1 )} \\
&=(-1)^{p+1} (-\xi)^{k((\frac{q}{2})^2 -1 )} (1 + \xi^{2pr + kr^2 - 2kr\frac{q}{2}} )
\end{align*}
The first (resp. second) term corresponding to $j=\frac{q}{2}$ (resp. $r-\frac{q}{2}$).\\
Since $\xi$ is a root of unity of order $2r$, we have that $\xi^{2pr + kr^2 - 2kr\frac{q}{2}} =1$ which gives the expected result. When $r$ is odd, we would only have the contribution corresponding to $j=\frac{q}{2}$.
\end{proof}
As said before, the vector considered in Theorem \ref{Thm:Main} in non-zero because of Theorem \ref{Thm:basis} and it is easy to check from Lemma \ref{Lemma:EvCasParticuliers} that it is in the Kernel of the evaluation map.
\section{A few words on the general setting}
The study of $ev_{M_k}$ for a generic $k$ was done in a previous version of this paper (available on ArXiv : \cite{Kit} v2). As in \cite{GilLens}, we find that it is highly related to the generalised quadratic Gauss sums and the dimension of its image depends on the number of squares modulo $k$. However, the conclusions were not exhaustive and required a very technical study of the linear independancy of the generalised quadratic Gauss sums. Here are the conclusions of the previous version :
\subsection{When \texorpdfstring{$k=2p$}{} with \texorpdfstring{$p$}{} a prime number (including \texorpdfstring{$2$}{})}
$ev_{M_k}$ is injective on each of its graded subspaces.
\subsection{When \texorpdfstring{$k$}{} is odd}
$(0,2)_T + (1,2)_T \in Ker (ev_{M_k}\vert_{K_\emptyset (M_k)})$
\subsection{When \texorpdfstring{$k \equiv 2 \pmod 4$}{} has at least 3 different prime divisors}\label{subsec:3dvs}
This case is more complicated. We prove the existence of an even divisor $d$ of $k$, two number $l,l'$ coprimes with $\frac{k}{d}$ and a number $m$ such that $l^2 = l'^2 + m \frac{k}{d}$.\\
We then have : $$(dl,0)_T - A^{-md} (dl' , 0)_T - A^{-k} (1-A^{-md})(0,2)_T \in Ker (ev_{M_k}\vert_{K_\emptyset (M_k)})$$
\subsection{When \texorpdfstring{$k\neq 4$}{} is a multiple of 4}
Applying the same method as in \ref{subsec:3dvs} gets the same result. However, in the aforementioned previous version, we claimed that $ev ((\frac{k}{2} ,0)_T)$ and $ev ( (0,0)_T )$ were colinear in this case, which was an error.
\subsection{When \texorpdfstring{$k=2p^\alpha$}{} with \texorpdfstring{$p>2$}{} prime and \texorpdfstring{$\alpha >1$}{}}
We do not have a conclusion in this case.
\bibliographystyle{The_GM_Map_is_not_injective}
\bibliography{The_GM_Map_is_not_injective}
\texttt{Université Bourgogne Europe, CNRS, IMB UMR 5584, 21000 Dijon, France}\\
\textit{Email adress : edwin.kitaeff@ube.fr}
\end{document}